\newtheorem{thm}{Theorem}[section]
\newtheorem{cor}[thm]{Corollary}
\theoremstyle{definition}
\newtheorem{exa}[thm]{Example}
\newtheorem{ques}[thm]{Question}
\theoremstyle{remark}
\title{Construction of  New Gyrogroups and the Structure of their Subgyrogroups}
\author{\bf S. Mahdavi, A. R. Ashrafi$^{\star}$   and M. A. Salahshour  }
\thanks{$^{\star}$ Corresponding author (Email: ashrafi@kashanu.ac.ir)}
\address{\textbf{ Soheila Mahdavi and Ali Reza Ashrafi:} Department of Pure Mathematics, Faculty of Mathematical Sciences, University of Kashan, Kashan 87317$-$53153, I. R. Iran}
\address{\textbf{Mohammad Ali Salahshour:} Department of Mathematics, Savadkooh Branch, Islamic Azad University, Savadkooh, I. R. Iran}
\begin{document}
\maketitle
\begin{abstract}
Suppose that $G$ is a groupoid with binary operation $\otimes$. The pair $(G,\otimes)$ is said to be a gyrogroup if the operation $\otimes$ has a left identity,   each element $a \in G$ has a left inverse and the gyroassociative law and the left loop property are satisfied in $G$. In this paper, a method  for constructing new gyrogroups from old ones is presented and the structure of subgyrogroups of these gyrogroups are also given. As a consequence of this work, five $2-$gyrogroups of order $2^n$, $n\geq 3$, are presented. Some open questions are also proposed.

\vskip 3mm

\noindent{\bf Keywords:} Gyrogroup, subgyrogroup, groupoid.

\vskip 3mm

\noindent{\it 2020 AMS Subject Classification Number:} Primary 20N05; Secondary 20F99, 20D99.
\end{abstract}


\section{Introduction}
A groupoid $G$ with a binary operation $\oplus$ is called a \textit{gyrogroup} if the following hold:
\begin{itemize}
\item there exists an element $0 \in G$ such that for all $x \in G$, $0 \oplus x = x$;

\item for each $a \in  G$, there exists $b \in G$ such that $b \oplus a$ $= 0$;

\item there exists a function $gyr: G \times G \longrightarrow Aut((G,\oplus))$ such that for every $a, b, c \in G$, $a \oplus (b \oplus c) =  (a \oplus b) \oplus gyr[a,b]c$, where  $gyr[a,b]c = gyr(a,b)(c)$;

\item for each $a, b \in G$, $gyr[a,b]$ $=$ $gyr[a \oplus b, b]$.
\end{itemize}
Note that these axioms imply their right counterpart. It is easy to see that a group is a gyrogroup  if we define the gyroautomorphisms to be the identity automorphism. For every $a,b \in G$, the mapping $gyr[a,b]$ is called the gyroautomorphism generated by $a$ and $b$. The gyrogroup structure is a result of a pioneering work of Abraham Unbar in the study of Lorentz group \cite{6,7}.

Following Ferreira \cite[Sect. 4]{f}, suppose that $T$ is a gyrogroup and $H$ is a nonempty subset of $T$. $H$ is a subgyrogroup of $T$, written $H \leq G$, if $H$ is a gyrogroup under the operation inherited from $T$ and the restriction of $gyr[a,b]$, $a, b \in H$, to $H$ becomes an gyroautomorphism of $H$. It is merit to mention here that Ferreira used the term ``gyrosubgroup" and the term ``subgyrogroup" was first used in \cite{44}. The subgyrogroup $H$ is normal in $T$, written $H \unlhd T$, if it is the kernel of a gyrogroup homomorphism from $T$ to another gyrogroup \cite{44}. The subgyrogroup $H$ of $T$ is said to be an $L-$subgyrogroup, denoted by $H \leq_L G$, if $gyr[a,h](H) = H$, for all $a \in G$ and $h \in H$, see \cite[Definition 8]{2} for more details.

Suppose that $(K, \oplus)$ is a gyrogroup. The gyrogroup cooperation or coaddition $\boxplus$ is a second binary operation on $K$ defined as $a \boxplus b = a \oplus gyr[a,\ominus b]b$, for all $a, b \in K$.  It is well-known that $(K,\boxplus)$ is also a gyrogroup named \textit{cogyrogroup} of $(K,\oplus)$, see \cite[Theorem 2.14]{9} for details.

Suppose that $L$ is a gyrogroup, $A, B$ are subsets of $G$ and $a \in A, b \in B$. Following Suksumran \cite[Definition 3.13]{4},  $conj_{a}(b)=(a\oplus b)\boxminus a$ is called the conjugate of $b$ by $a$ and $conj_{a}(B)=\lbrace (a\oplus b)\boxminus a \vert b\in B \rbrace $ is named the conjugate of $B$ by $a$. For simplicity of our argument, we set $conj_{A}(B)$ $=$ $\lbrace conj_{a}(B) \mid a\in A \rbrace$.

For the sake of completeness, we mention here two interesting results of Suksumran. These are Theorem 31 and Proposition 38 in \cite{2}, respectively.

\begin{thm}\label{t0}
 Let $G$ be a gyrogroup and let $H$ be a subgyrogroup of $G$. Then the following hold:
\begin{enumerate}
\item $H \unlhd G$ if and only if the operation on the coset space $G/H$ given by $(a \oplus H) \oplus (b \oplus H) = (a \oplus b) \oplus H$ is well defined.

\item Suppose the following conditions are satisfied:
\begin{enumerate}
\item  $gyr[h,a] = id_G$, for all $h \in H$ and $a \in G$;

\item  $gyr[a,b](H) \subseteq H$, for all $a, b \in G$;

\item  $a \oplus H = H \oplus a$, for all $a \in G$.
\end{enumerate}
Then $H \unlhd G$.
\end{enumerate}
\end{thm}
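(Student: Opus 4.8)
The plan is to prove part (1) first and then deduce part (2) from it. For (1), I would establish the standard correspondence between the left cosets of a kernel and the fibers of the homomorphism. In the forward direction, assume $H = \ker\phi$ for a gyrogroup homomorphism $\phi \colon G \to K$. Since $\phi(a \oplus h) = \phi(a) \oplus \phi(h) = \phi(a)$ for every $h \in H$, and conversely $\phi(x) = \phi(a)$ forces $\ominus a \oplus x \in H$ and hence $x = a \oplus (\ominus a \oplus x) \in a \oplus H$ (using $gyr[a, \ominus a] = id_G$ and $\phi(\ominus a) = \ominus\phi(a)$), the left cosets coincide exactly with the fibers $\phi^{-1}(\phi(a))$. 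Well-definedness of the coset product is then immediate: if $a \oplus H = a' \oplus H$ and $b \oplus H = b' \oplus H$, then $\phi(a) = \phi(a')$ and $\phi(b) = \phi(b')$, so $\phi(a \oplus b) = \phi(a' \oplus b')$ and the two product cosets agree.

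For the converse in (1), assuming the product on $G/H$ is well defined, I would verify directly that $(G/H, \oplus)$ is a gyrogroup: the coset $0 \oplus H = H$ is a left identity, $\ominus a \oplus H$ is a left inverse of $a \oplus H$, and the gyroautomorphisms are defined by $Gyr[a \oplus H, b \oplus H](c \oplus H) = gyr[a,b]c \oplus H$. The main obstacle in this direction is showing that these induced gyrations are well defined on cosets and are automorphisms of $G/H$, after which gyroassociativity and the left loop property descend mechanically from the corresponding identities in $G$. Once $G/H$ is a gyrogroup, the projection $\pi \colon G \to G/H$, $a \mapsto a \oplus H$, is a homomorphism with kernel $H$, so $H \unlhd G$.

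For (2), by part (1) it suffices to show that the coset product is well defined. I would first note that combining (b) with the gyration inversion law $gyr[a,b]^{-1} = gyr[b,a]$ upgrades (b) to $gyr[a,b](H) = H$ for all $a, b \in G$; in particular $H$ is an $L$-subgyrogroup, so its left cosets partition $G$, and it is enough to prove $a' \oplus b' \in (a \oplus b) \oplus H$ whenever $a' = a \oplus h_1$ and $b' = b \oplus h_2$ with $h_1, h_2 \in H$. The computation proceeds by rewriting $a \oplus h_1 = h_1' \oplus a$ via (c), using (a) to drop $gyr[h_1', a]$ in the reassociation $(h_1' \oplus a) \oplus (b \oplus h_2) = h_1' \oplus \big(a \oplus (b \oplus h_2)\big)$, applying gyroassociativity to get $a \oplus (b \oplus h_2) = (a \oplus b) \oplus gyr[a,b]h_2$ with $gyr[a,b]h_2 \in H$ by (b), and then using (c) and (a) once more to collect every $H$-factor on the left of $a \oplus b$. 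This lands $a' \oplus b'$ in $H \oplus (a \oplus b) = (a \oplus b) \oplus H$, giving coset equality by the partition property.

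The step I expect to be the main obstacle is the careful gyroassociative bookkeeping in (2): each reassociation introduces a gyration that must be neutralized either by hypothesis (a) (when its left argument lies in $H$) or absorbed into $H$ by (b), and one must confirm that the cosets genuinely partition $G$ so that membership $a' \oplus b' \in (a \oplus b) \oplus H$ yields the desired coset equality. In part (1) the analogous delicate point is verifying that the induced gyrations $Gyr[a \oplus H, b \oplus H]$ are well defined and automorphic on $G/H$.
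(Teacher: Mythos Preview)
The paper does not prove this statement: it is quoted in the Introduction with the preface ``For the sake of completeness, we mention here two interesting results of Suksumran. These are Theorem~31 and Proposition~38 in \cite{2}, respectively,'' and no proof is supplied. Hence there is nothing in the paper to compare your argument against.

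Your outline is nonetheless a correct route to the result. One small point worth making explicit in part~(2): when you pass from $(h_1' \oplus a) \oplus (b \oplus h_2)$ to $h_1' \oplus \big(a \oplus (b \oplus h_2)\big)$ via the right gyroassociative law, the gyration that appears is $gyr[a,h_1']$, not $gyr[h_1',a]$; you need the inversion identity $gyr[a,h_1'] = gyr[h_1',a]^{-1} = id_G$ (which you already invoked for condition~(b)) to kill it. Similarly, in the converse of~(1) you can sidestep the well-definedness check for the induced gyrations by observing that in any magma with left identity and left inverses the gyrator is forced by the operation via $gyr[a,b]c = \ominus(a\oplus b) \oplus \big(a \oplus (b \oplus c)\big)$, so once the coset product is well defined the coset gyrations are automatically so.
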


Throughout this paper our notations are standard and can be taken mainly from \cite{1,8}. We refer the interested readers to consult the survey \cite{5} for a complete history of gyrogroups. We also refer to \cite{42,44} for subgyrogroups, gyrogroup homomorphisms and quotient gyrogroups.

\section{Main Results}

The aim of this section is to first construct a new class of finite gyrogroups. Then the main properties of this class of gyrogroups will be investigated. Suppose that $H$ is a group and $A$ is a set, $A \cap H = \emptyset$ and $A$ has the same size as $H$. Choose a bijective map $\varphi: H \longrightarrow A$ and set $G = A \cup H$. Define the binary operation $\otimes$ on $G$ as follows:
$$a \otimes b = \left\{ \begin{array}{ll} ab & a, b \in H\\ \varphi(\varphi^{-1}(a)b) & a\in A, b \in H\\ \varphi(a\varphi^{-1}(b)) & a \in H, b \in A\\ \varphi^{-1}(a) \varphi^{-1}(b) & a, b \in A\end{array}\right..$$
Then it is easy to see that $(G,\otimes)$ is a group under the operation $\otimes$. We now present some notations that help us to generalize this result to gyrogroups.

Suppose $(H^+,\oplus)$ is a gyrogroup, $H^{-}$ is a set disjoint from $H^+$ and $\varphi: H^+ \longrightarrow H^{-}$ is a bijective map.  An arbitrary element of $H^+$ is denoted by $x^+$, and define $x^- = \varphi(x^+)$,  $G = H^+ \cup H^{-}$ and
$$a^\varepsilon \otimes b^\delta = \left\{ \begin{array}{ll}
a^+ \oplus b^+ & (\varepsilon, \delta )=(+,+) \ or \ (-,-)\\
(a^+\oplus b^+)^- & (\varepsilon, \delta)=(+,-) \ or \ (-,+)
\end{array}\right.,$$ where $a, b \in G$.

We are now ready to state our first result:

\begin{thm} \label{t1}
$(G,\otimes)$ is a gyrogroup and the gyrator $gyr_G: G \times G \longrightarrow Aut(G)$ is defined as:
$$gyr_{G}[a^{\epsilon},b^{\delta} ](t^{\gamma} ) = \left\{
\begin{array}{ll}
gyr_{H^{+}}[a^{+},b^{+}](t^{+})  & \gamma = +\\
(gyr_{H^{+}}[a^{+},b^{+}](t^{+})) ^{-} &  \gamma = - \\
\end{array}\right..$$
\end{thm}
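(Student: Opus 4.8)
The plan is to recognize $(G,\otimes)$ as the direct product of the gyrogroup $(H^+,\oplus)$ with the two-element group $\{+,-\}$ under multiplication of signs, and to read off both the gyrogroup axioms and the gyrator formula from this identification. Concretely, I would first isolate the two bookkeeping rules hidden in the definition of $\otimes$: the \emph{magnitude} of $a^\epsilon\otimes b^\delta$ is always $a^+\oplus b^+\in H^+$, while its \emph{sign} is the product $\epsilon\delta$, with the conventions $(+)(+)=(-)(-)=+$ and $(+)(-)=(-)(+)=-$. Thus the map $G\to H^+\times\{+,-\}$, $a^\epsilon\mapsto(a^+,\epsilon)$, carries $\otimes$ to the componentwise operation, and the entire theorem amounts to the statement that such a direct product is again a gyrogroup whose gyrator is trivial on the group factor. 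I would then verify the four axioms directly so that the argument is self-contained.

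First I would dispatch the two easy axioms. The element $0^+$, where $0$ is the left identity of $H^+$, is a left identity for $\otimes$, since $0^+\otimes x^\gamma$ has magnitude $0\oplus x=x$ and sign $(+)\gamma=\gamma$, hence equals $x^\gamma$. For the left inverse of $a^\epsilon$ I would take $(\ominus a)^\epsilon$, where $\ominus a$ is the left inverse of $a^+$ in $H^+$: its product with $a^\epsilon$ has magnitude $(\ominus a)\oplus a=0$ and sign $\epsilon\epsilon=+$, so it equals $0^+$. Note that the sign is preserved under inversion, exactly as one expects from the $\{+,-\}$ factor.

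Next I would check that the candidate $gyr_{G}[a^\epsilon,b^\delta]$ is a bijection of $G$ and a homomorphism for $\otimes$. Bijectivity is immediate, since the map preserves signs and acts on magnitudes through $gyr_{H^+}[a^+,b^+]$, which is an automorphism of $H^+$. For the homomorphism property one computes that $gyr_{G}[a^\epsilon,b^\delta](t^\gamma\otimes s^\zeta)$ and $gyr_{G}[a^\epsilon,b^\delta]t^\gamma\otimes gyr_{G}[a^\epsilon,b^\delta]s^\zeta$ have the same sign, namely $\gamma\zeta$ (because $gyr_{G}$ fixes signs), and the same magnitude, the latter equality being precisely the fact that $gyr_{H^+}[a^+,b^+]$ respects $\oplus$. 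Hence $gyr_{G}[a^\epsilon,b^\delta]\in\Aut(G)$.

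Finally I would establish the gyroassociative law and the left loop property, which is where the sign bookkeeping does its real work. For gyroassociativity, both $a^\epsilon\otimes(b^\delta\otimes c^\zeta)$ and $(a^\epsilon\otimes b^\delta)\otimes gyr_{G}[a^\epsilon,b^\delta]c^\zeta$ have magnitude equal to the two sides of the gyroassociative law in $H^+$ (which agree by hypothesis), and sign equal to $\epsilon(\delta\zeta)$ and $(\epsilon\delta)\zeta$ respectively, which coincide by associativity of sign multiplication; here it is crucial that the gyrator leaves the sign $\zeta$ of $c^\zeta$ untouched. For the left loop property I would use that $gyr_{G}[a^\epsilon,b^\delta]$ depends on its arguments only through the magnitudes $a^+,b^+$; since $a^\epsilon\otimes b^\delta$ has magnitude $a^+\oplus b^+$, the identity $gyr_{G}[a^\epsilon,b^\delta]=gyr_{G}[a^\epsilon\otimes b^\delta,b^\delta]$ reduces to the left loop property $gyr_{H^+}[a^+,b^+]=gyr_{H^+}[a^+\oplus b^+,b^+]$ in $H^+$. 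The only genuinely delicate point is keeping the signs consistent in the associativity computation; everything else is inherited verbatim from $(H^+,\oplus)$.
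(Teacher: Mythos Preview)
Your proof is correct and checks the same four axioms as the paper, but you organize the verification through a genuinely cleaner lens. By identifying $G$ with $H^+\times\{+,-\}$ via $a^\epsilon\mapsto(a^+,\epsilon)$ and observing that $\otimes$ becomes the componentwise operation (gyrogroup addition on the first factor, sign multiplication on the second), you reduce every axiom uniformly to its counterpart in $H^+$ together with the trivial group law on signs. The paper instead proceeds by brute-force case analysis: for the homomorphism property of $gyr_G$ it splits on $(\gamma,\lambda)\in\{(+,+),(-,-),(+,-),(-,+)\}$, and for gyroassociativity it splits first on $\gamma$ and then on $(\epsilon,\delta)$, writing out four separate chains of equalities. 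What your approach buys is brevity and a conceptual explanation---it makes transparent that $G$ is precisely the direct product of the gyrogroup $H^+$ with the two-element group, so the gyrator is trivial on the group factor and the whole theorem is an instance of the fact that such a product is again a gyrogroup. The paper's approach, while longer, is entirely elementary and does not require the reader to recognize this product structure.
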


\begin{proof}
Suppose that $0^+$ is the identity of $H^+$. Then, it is easy to see that for each $a \in G$, $0^+ \otimes a = a$. For $x^\epsilon \in G$, we define
$$\oslash x^\epsilon = \left\{\begin{array}{cl} \ominus x^+ & \epsilon= + \\ (\ominus x^+)^- & \epsilon = -\end{array}\right..$$ Then by definition of $\otimes$, $x^\epsilon \otimes (\oslash x^\epsilon) = (\oslash x^\epsilon) \otimes x^\epsilon  = 0^+$. Here, the notation $\oslash x$ is used for the inverse of an arbitrary element $x \in G$. To prove  that $gyr_G[a,b] \in Aut(G)$, we first show that $gyr_G[a,b]$ is a gyrogroup homomorphism. We  have two separate cases as follows:
\begin{enumerate}
\item[1.]   $( \gamma, \lambda) = (+,+) \    or     \   (-,-)$
. In this case, by definition of gyrogroup homomorphism,
\begin{eqnarray*}
gyr_{G}[a^{\epsilon},b^{\delta}](x^{\gamma}\otimes y^{\lambda})&=&gyr_{G}[a^{\epsilon},b^{\delta}](x^{+}\oplus y^{+})\\
&=&gyr_{H^{+}}[a^{+},b^{+}](x^{+}\oplus y^{+})\\
&=&gyr_{H^{+}}[a^{+},b^{+}]x^{+}\oplus gyr_{H^{+}}[a^{+},b^{+}]y^{+}\\
&=&gyr_{G}[a^{\epsilon},b^{\delta}]x^{\gamma}\otimes gyr_{G}[a^{\epsilon},b^{\delta}]y^{\lambda}.
\end{eqnarray*}

\item[2.] $( \gamma, \lambda) = (+,-)  \ or \ (-,+)$. In this case, we have
\begin{eqnarray*}
gyr_{G}[a^{\epsilon},b^{\delta}](x^{\gamma}\otimes y^{\lambda})&=&gyr_{G}[a^{\epsilon},b^{\delta}]((x^{+}\oplus y^{+})^{-})\\
&=&(gyr_{H^{+}}[a^{+},b^{+}](x^{+}\oplus y^{+}))^{-}\\
&=&(gyr_{H^{+}}[a^{+},b^{+}]x^{+}\oplus gyr_{H^{+}}[a^{+},b^{+}]y^{+})^{-}\\
&=&gyr_{G}[a^{\epsilon},b^{\delta}]x^{\gamma}\otimes gyr_{G}[a^{\epsilon},b^{\delta}]y^{\lambda}.
\end{eqnarray*}

\end{enumerate}

This proves that $gyr_G[a,b]$ is a gyrogroup homomorphism. A case by case investigation shows that  $gyr_G[a,b]$ is one-to-one and so it is an automorphism of the groupoid $(G,\otimes)$. In what follows, the gyroassociative law is investigated in two different cases. To do this, we assume that  $a^{\epsilon}, b^{\delta}$ and $c^{\gamma}$ are arbitrary elements of $G$. We first assume that  $\gamma=+$ . Then,
\begin{itemize}
\item[1.] $(\epsilon,\delta) = (+,+)$ or $(-,-)$. A simple calculation shows that
\begin{eqnarray*}
a^{\epsilon}\otimes (b^{\delta}\otimes c^{+})&=&a^{+}\oplus (b^{+} \oplus c^{+})\\
&=&(a^{+}\oplus b^{+})\oplus gyr_{H^{+}}[a^{+},b^{+}]c^{+}\\
&=&(a^{+}\oplus b^{+})\otimes gyr_{H^{+}}[a^{+},b^{+}]c^{+}\\
&=&(a^{\epsilon}\otimes  b^{\delta})\otimes gyr_{G}[a^{\epsilon},b^{\delta}]c^{+}.
\end{eqnarray*}

\item[2.]  $(\epsilon,\delta)=(-,+)$  or $(+,-)$. In this case, we have:
\begin{eqnarray*}
a^{\epsilon}\otimes (b^{\delta}\otimes c^{+})&=&(a^{+}\oplus (b^{+}\oplus c^{+}))^{-}\\
&=&((a^{+}\oplus b^{+})\oplus gyr_{H^{+}}[a^{+},b^{+}]c^{+})^{-}\\
 &=&(a^{+}\oplus b^{+})^{-}\otimes gyr_{H^{+}}[a^{+},b^{+}]c^{+} \\
&=&(a^{\epsilon}\otimes b^{\delta})\otimes gyr_{G}[a^{\epsilon},b^{\delta}]c^{+}.
\end{eqnarray*}
\end{itemize}

Next we assume that $\gamma=-$ . Then
\begin{itemize}
\item[1.] $(\epsilon,\delta) = (+,+)$ or $(-,-)$. A simple calculation shows that
\begin{eqnarray*}
a^{\epsilon}\otimes(b^{\delta}\otimes c^{-})&=&(a^{+}\oplus (b^{+}\oplus c^{+}))^{-}\\
&=&((a^{+}\oplus b^{+})\oplus gyr_{H^{+}} [a^{+},b^{+}]c^{+})^{-}\\
  &=&(a^{+}\oplus b^{+}) \otimes (gyr_{H^{+}}[a^{+},b^{+}]c^{+})^{-}\\
  &=&(a^{\epsilon}\otimes b^{\delta})\otimes gyr_{G}[a^{\epsilon},b^{\delta}]c^{-}.
\end{eqnarray*}

\item[2.]  $(\epsilon,\delta)=(-,+)$  or $(+,-)$. In this case, we have:
\begin{eqnarray*}
a^{\epsilon}\otimes(b^{\delta}\otimes c^{-})&=&a^{+}\oplus (b^{+}\oplus c^{+}) \\
&=&(a^{+}\oplus b^{+})\oplus gyr_{H^{+}}[a^{+},b^{+}]c^{+}\\
 &=&(a^{+}\oplus b^{+})^{-}\otimes(gyr_{H^+}[a^{+},b^{+}]c^{+})^{-}\\
 &=&(a^{\epsilon}\otimes b^{\delta})\otimes gyr_{G}[a^{\epsilon},b^{\delta}]c^{-}.
\end{eqnarray*}
\end{itemize}

This proves that the gyroassociative lave is valid. To complete the proof, we have to prove the left loop property.
To de this, we first assume that  $\gamma=+$ . Then,
\begin{eqnarray*}
gyr_{G}[a^{\epsilon},b^{\delta}](t^{+})&=&gyr_{H^{+}}[a^{+},b^{+}]t^{+}\\
&=&gyr_{H^{+}}[a^{+}\oplus b^{+},b^{+}]t^{+}\\
&=&gyr_{G}[a^{\epsilon}\otimes b^{\delta},b^{\delta}]t^{+},
\end{eqnarray*}
as desired. Next we assume that $\gamma=-$ . In these cases, we have
\begin{eqnarray*}
gyr_{G}[a^{\epsilon},b^{\delta}](t^{-})&=& (gyr_{H^{+}}[a^{+} ,b^{+}]t^{+})^{-}\\
&=&(gyr_{H^{+}}[a^{+}\oplus b^{+},b^{+}]t^+)^{-}\\
&=&gyr_{G}[a^{\epsilon}\otimes b^{\delta},b^{\delta}]t^{-}
\end{eqnarray*}
which completes the proof.
\end{proof}

From now on, $(H^+,\oplus)$ is an arbitrary gyrogroup and $(G,\otimes)$ is its associated gyrogroup constructed in Theorem \ref{t1}.

\begin{cor}
Let $H^+$ and $G$ be gyrogroups as in Theorem \ref{t1}. Then $H^{-} = 0^{-} \otimes H^{+}$ and $G=H^{+}\cup (0^{-}\otimes H^{+})$, where $0^{-}=\varphi(0^{+})$.
\end{cor}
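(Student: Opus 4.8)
The plan is to establish the first identity $H^{-} = 0^{-} \otimes H^{+}$ by a direct computation from the definition of $\otimes$, and then to read off the second identity as an immediate consequence of the construction $G = H^{+}\cup H^{-}$.

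First I would unpack the set on the right as $0^{-}\otimes H^{+} = \{0^{-}\otimes h^{+} : h^{+}\in H^{+}\}$ and fix an arbitrary $h^{+}\in H^{+}$. Since $0^{-}\in H^{-}$ and $h^{+}\in H^{+}$, the relevant sign pair is $(\varepsilon,\delta)=(-,+)$, so the second branch of the definition of $\otimes$ applies and gives $0^{-}\otimes h^{+} = (0^{+}\oplus h^{+})^{-}$. Because $0^{+}$ is the left identity of $(H^{+},\oplus)$, we have $0^{+}\oplus h^{+} = h^{+}$, whence $0^{-}\otimes h^{+} = (h^{+})^{-} = h^{-}$.

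Next I would let $h^{+}$ range over all of $H^{+}$. The computation above shows $0^{-}\otimes h^{+} = h^{-}$ for every $h^{+}$, and since $\varphi$ is a bijection each element of $H^{-}$ equals $h^{-}$ for a unique $h^{+}\in H^{+}$. Consequently $\{0^{-}\otimes h^{+} : h^{+}\in H^{+}\} = \{h^{-} : h^{+}\in H^{+}\} = H^{-}$, which is precisely the first claimed identity.

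Finally, since $G = H^{+}\cup H^{-}$ by the construction preceding Theorem \ref{t1}, substituting $H^{-} = 0^{-}\otimes H^{+}$ yields $G = H^{+}\cup(0^{-}\otimes H^{+})$, as required. I do not expect any genuine obstacle in this argument; the only point that merits care is the correct reading of the sign pair in the definition of $\otimes$, so that the $(-,+)$ branch (and not the $(+,+)$ branch) is invoked when multiplying on the left by $0^{-}$.
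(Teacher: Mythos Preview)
Your proposal is correct and follows essentially the same approach as the paper: a direct computation of $0^{-}\otimes h^{+}=(0^{+}\oplus h^{+})^{-}=h^{-}$ using the $(-,+)$ branch of the definition of $\otimes$, followed by the substitution into $G=H^{+}\cup H^{-}$. The only difference is that you spell out the role of the bijectivity of $\varphi$ and the choice of sign pair more explicitly than the paper does.
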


\begin{proof}
By Theorem \ref{t1},
\begin{eqnarray*}
0^{-}\otimes H^{+}&=&\lbrace 0^{-}\otimes h^{+}\vert h^{+}\in H^{+}\rbrace\\
&=& \lbrace (0^{+}\oplus h^{+})^{-}\vert h^{+}\in H^{+}\rbrace\\
&=& \lbrace  (h^{+})^{-}\vert h^{+}\in H^{+}\rbrace\\
&=& H^{-}.
\end{eqnarray*}
Thus $G=H^{+}\cup H^{-}=H^{+}\cup (0^{-}\otimes	 H^{+})$.
\end{proof}

\begin{cor}
If  $(H^+,\oplus)$ is gyrocommutative then $(G,\otimes)$ is also gyrocommutative.
\end{cor}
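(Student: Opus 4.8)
The plan is to verify the gyrocommutative law for $(G,\otimes)$ directly, namely that $a^\epsilon \otimes b^\delta = gyr_G[a^\epsilon,b^\delta](b^\delta \otimes a^\epsilon)$ holds for all $a^\epsilon, b^\delta \in G$, using as input the corresponding identity $a^+ \oplus b^+ = gyr_{H^+}[a^+,b^+](b^+ \oplus a^+)$ that expresses the gyrocommutativity of $H^+$. The entire content of the corollary will be carried by this hypothesis together with a careful bookkeeping of signs.

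The key preliminary observation concerns that sign bookkeeping. From the definition of $\otimes$, the product $a^\epsilon \otimes b^\delta$ equals $a^+ \oplus b^+$ when $\epsilon = \delta$ and equals $(a^+ \oplus b^+)^-$ when $\epsilon \neq \delta$; in either case its underlying $H^+$-element is $a^+ \oplus b^+$, while its sign is determined solely by whether $\epsilon$ and $\delta$ agree. Applying the same rule to $b^\delta \otimes a^\epsilon$ shows that its underlying element is $b^+ \oplus a^+$ and that it carries exactly the same sign, since the condition ``$\epsilon = \delta$'' is symmetric in its two arguments. Thus I would first record that $a^\epsilon \otimes b^\delta$ and $b^\delta \otimes a^\epsilon$ carry an identical sign, which I call $\sigma$.

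Next I would feed $b^\delta \otimes a^\epsilon$ into the gyrator. By the formula for $gyr_G$ established in Theorem \ref{t1}, applying $gyr_G[a^\epsilon,b^\delta]$ leaves the sign $\sigma$ untouched and acts on the $H^+$-part through $gyr_{H^+}[a^+,b^+]$. Hence $gyr_G[a^\epsilon,b^\delta](b^\delta \otimes a^\epsilon)$ has sign $\sigma$ and $H^+$-part $gyr_{H^+}[a^+,b^+](b^+ \oplus a^+)$, which by the gyrocommutativity of $H^+$ equals $a^+ \oplus b^+$. Since $a^\epsilon \otimes b^\delta$ also has sign $\sigma$ and $H^+$-part $a^+ \oplus b^+$, the two sides coincide, which is precisely the gyrocommutative law for $G$.

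In the write-up I would likely split the verification into the four cases $(\epsilon,\delta) \in \{(+,+),(+,-),(-,+),(-,-)\}$ to match the style of the proof of Theorem \ref{t1}, but the uniform argument above makes clear that none of the cases presents any genuine difficulty. There is essentially no obstacle: the only point demanding a little care is confirming that the sign of $b^\delta \otimes a^\epsilon$ really matches that of $a^\epsilon \otimes b^\delta$, and that $gyr_G$ preserves signs; both facts are immediate from the defining formulas, so the reduction to the gyrocommutativity of $H^+$ goes through cleanly.
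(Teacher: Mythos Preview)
Your proposal is correct and follows essentially the same approach as the paper: the paper's proof also verifies $gyr_G[a^\epsilon,b^\delta](b^\delta \otimes a^\epsilon) = a^\epsilon \otimes b^\delta$ by splitting into the two sign cases $(\epsilon,\delta)\in\{(+,+),(-,-)\}$ and $(\epsilon,\delta)\in\{(+,-),(-,+)\}$, reducing each to the gyrocommutativity of $H^+$. Your uniform ``sign $\sigma$'' bookkeeping is simply a cleaner packaging of that same two-case computation.
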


\begin{proof}
Suppose $a^\epsilon$ and $b^\delta$ are arbitrary in $G$. We consider two separate cases as follows:
\begin{itemize}
\item[1.] $(\epsilon,\delta) = (+,+)$ or $(-,-)$. A simple calculation shows that
\begin{eqnarray*}
gyr_{G}[a^{\epsilon},b^{\delta}](b^{\delta}\otimes a^{\epsilon})&=&gyr_{G}[a^{\epsilon} ,b^{\delta}](b^{+}\oplus a^{+})\\
&=&gyr_{H^{+}}[a^{+} ,b^{+}](b^{+}\oplus a^{+})\\
&=&a^{+}\oplus b^{+}\\
&=&a^{\epsilon}\otimes b^{\delta}.
\end{eqnarray*}

\item[2.]  $(\epsilon,\delta)=(-,+)$  or $(+,-)$. In this case, we have:
\begin{eqnarray*}
gyr_{G}[a^{\epsilon},b^{\delta}](b^{\delta}\otimes a^{\epsilon})&=&gyr_{G}[a^{\epsilon} ,b^{\delta}]((b^{+}\oplus a^{+})^{-})\\
&=&(gyr_{H^{+}}[a^{+},b](b^{+}\oplus  a^{+}))^{-}\\
&=&(a^{+}\oplus b^{+})^{-}\\
&=&a^{\epsilon}\otimes b^{\delta}.
\end{eqnarray*}
\end{itemize}
This completes the proof.
\end{proof}

A nondegenerate  gyrogroup is a gyrogroup which is not a group. A simple calculation by Gap \cite{45} shows that there is no nondegenerate  gyrogroup of order $\leq 7$. On the other hand, our calculations recorded in Tables 1 and 2 show that the quasigroups $K(1)$, $L(1)$, $M(1)$, $N(1)$ and $O(1)$ are gyrogroups of order $8$, but we do not have an efficient algorithm to construct all gyrogroups of this order. So, it is natural to ask the following question:

\begin{ques}
How many  gyrogroups of order $8$ are there up to isomorphism?
\end{ques}

In Tables 1 and 2, the Cayley tables and its associated gyration tables of the gyrogroups $(K(1),\oplus_K)$, $(L(1),\oplus_L)$, $(M(1),\oplus_M)$, $(N(1),\oplus_N)$ and $(O(1),\oplus_O)$ of order $8$ are given. For simplicity of our argument,  the underlying set of each gyrogroup is assumed to be  $\{ 0, 1, 2, 3, 4, 5, 6, 7\}$. In these tables, $A = (4,5)(6,7)$, $B = (2,3)(4,5)$, $C = (4,5)$, $D = (2,3)(6,7)$ and $E = (4,5)(6,7)$ are automorphisms of the quasigroups $K(1)$, $L(1)$, $M(1)$, $N(1)$ and $O(1)$, respectively.

Suppose that $\boxplus$ and $\boxtimes$ are coadditions of $\oplus$ and $\otimes$ in the gyrogroups $H^+$ and $G$, respectively. It is usual to use the notations $\boxminus x$ and $\boxslash$ for the inverse of $x$ with respect to $\boxplus$ and $\boxtimes$, respectively. For each $a^\epsilon, b^\delta \in G$, we have:
\[ a^{\epsilon}\boxslash b^{\delta} =
\begin{cases}
a^{+}\boxminus b^{+}    & (\epsilon,\delta)=(+,+) \ or \ (-,-),\\
 (a^{+}\boxminus b^{+})^{-}   & (\epsilon , \delta)=(+,-) \ or \ (-,+).\\
\end{cases}
\]

\begin{exa} \label{e1}
Suppose that $R$ is a gyrogroup, $S$ is a normal subgyrogroup of $R$ and $a\in R$. By \cite[Proposition 39]{2},  $a \oplus S = S \oplus a$ and so $(a \oplus S) \boxminus a = (S \oplus a) \boxminus a = S$. Hence $S=conj_{a}(S)$. However, the converse is not generally true. To see this, we consider the gyrogroup $K(1)$ of order $8$ introduced in Table 1 and set $P = \{ 0, 2\}$. It is easy to see that $P=conj_{a}(P)$. If $P$ is normal in $K(1)$ then by \cite[Theorem 32]{2}, $( a\oplus b)\oplus P = a \oplus (P \oplus b) = (a \oplus P) \oplus b$, where $a$ and $b$ are arbitrary elements of $K(1)$. Suppose that $a=5$ and $b=6$. Then $(5\oplus 6) \oplus \lbrace 0,2 \rbrace$ $=$ $\lbrace 3,1 \rbrace \neq (5 \oplus \lbrace 0,2 \rbrace) \oplus 6$ $=$ $\lbrace 3,0\rbrace$. This proves that  $P$ is not normal in $K(1)$.
\end{exa}

\begin{ques}
Find a condition on the gyrogroup $L$ such that all subgyrogroups $T$ of $L$ satisfy the following condition:

\centerline{$\forall a \in L$, $conj_a(T) = T$ $\Leftrightarrow$ $T \unlhd L$.}
\end{ques}

Following Suksumran \cite{41}, let $L$ be a gyrogroup and let $a, b \in L$. The commutator $[a,b]$ is defined as
$[a,b] = \ominus (a \oplus b) \oplus gyr[a,b](b \oplus a)$ and the derived subgroup $L^\prime$ is the subgyrogroup generated by all commutators.  The author of the mentioned paper also noted that unlike the situation in group theory, it is still an open problem whether the derived subgyrogroup of a gyrogroup $L$, is normal
in $L$. Suppose that $L$ is a group and $T$ is a subgroup of $L$. It is an elementary fact that if $L^\prime \leq T$ then $T$ is normal in $L$. This is a generalization of the normality of derived subgroup. Again consider the subgyrogroup $P$ of $K$ presented in Example \ref{e1}. Then $K^\prime = \{ 0\} \leq P$, but $P$ is not normal in $K$. Hence the following question is natural:

\begin{ques}
Find a condition on the gyrogroup $L$ such that for all subgyrogroups $T$ of $L$, $L^\prime \leq T$ implies that $T \unlhd L$.
\end{ques}

\begin{thm} \label{t3}
With notations of Theorem  2.1, $conj_{a^{+}}(G)$ $=$ $conj_{a^{-}}(G)$ $=$ $conj_{a^{+}}(H^{+})$ $\cup$ $(conj_{a^{+}}(H^{+}))^{-}.$ In particular, $\vert conj_{a^{+}}(G) \vert = \vert conj_{a^{-}}(G) \vert = 2\vert conj_{a^{+}}(H^{+}) \vert$.
\end{thm}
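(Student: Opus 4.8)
The plan is to unwind the definition of conjugation in $G$ directly, using the definition of $\otimes$ together with the co-subtraction formula for $\boxslash$ displayed just before Example \ref{e1}. Recall that $conj_{a^\epsilon}(g) = (a^\epsilon \otimes g) \boxslash a^\epsilon$ and that $G = H^+ \cup H^-$, so it suffices to compute $conj_{a^\epsilon}(c^\gamma)$ as $c^\gamma$ ranges over the two halves $H^+$ (tag $\gamma = +$) and $H^-$ (tag $\gamma = -$), for each conjugating element $a^+$ and $a^-$. My aim is to show that in every one of these four cases the underlying element of the result is the $H^+$-conjugate $conj_{a^+}(c^+) = (a^+ \oplus c^+) \boxminus a^+$, while its $\pm$ tag is completely forced.

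First I would treat $conj_{a^+}$. For $\gamma = +$ both $\otimes$ and $\boxslash$ stay in the same-sign branch, so $conj_{a^+}(c^+) = (a^+ \oplus c^+) \boxslash a^+ = (a^+ \oplus c^+) \boxminus a^+$, which is exactly the conjugate taken inside $H^+$. For $\gamma = -$ one has $a^+ \otimes c^- = (a^+ \oplus c^+)^-$ by the mixed-sign branch of $\otimes$, and then co-subtracting $a^+$ falls in the $(-,+)$ branch of $\boxslash$, giving $((a^+ \oplus c^+) \boxminus a^+)^- = (conj_{a^+}(c^+))^-$. Hence $conj_{a^+}(G) = conj_{a^+}(H^+) \cup (conj_{a^+}(H^+))^-$. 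The computation for $conj_{a^-}$ is parallel: when $\gamma = +$ the product $a^- \otimes c^+ = (a^+ \oplus c^+)^-$ lies in $H^-$ and co-subtracting $a^-$ uses the $(-,-)$ branch, again producing $(a^+ \oplus c^+) \boxminus a^+ = conj_{a^+}(c^+)$; when $\gamma = -$ the product $a^- \otimes c^- = a^+ \oplus c^+$ returns to $H^+$ and co-subtracting $a^-$ uses the $(+,-)$ branch, yielding $(conj_{a^+}(c^+))^-$. The conceptual point that makes the four cases collapse is that the two independent sign flips, one coming from $\otimes$ and one from $\boxslash$, always cancel on the underlying element and compose so that the tag of $conj_{a^\epsilon}(c^\gamma)$ equals $\gamma$, independently of $\epsilon$.

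Assembling the four cases yields $conj_{a^+}(G) = conj_{a^-}(G) = conj_{a^+}(H^+) \cup (conj_{a^+}(H^+))^-$, which is the displayed identity. For the cardinality, since $H^-$ is disjoint from $H^+$ the two pieces of the union are disjoint, and $\varphi$ restricts to a bijection from $conj_{a^+}(H^+)$ onto $(conj_{a^+}(H^+))^-$; hence the union has size $2|conj_{a^+}(H^+)|$. I expect the only genuine obstacle to be bookkeeping: one must track the superscripts $\epsilon, \delta, \gamma$ carefully through both the definition of $\otimes$ and the co-subtraction formula, since a single misassigned sign would route a case into the wrong branch. Everything else is a routine substitution, so I would present the four cases compactly and foreground the invariance of the tag $\gamma$ as the organizing principle.
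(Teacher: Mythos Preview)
Your proposal is correct and follows essentially the same route as the paper's proof: both compute $conj_{a^\epsilon}(c^\gamma)$ by splitting into the four sign cases, reduce each to $(a^+ \oplus c^+) \boxminus a^+$ with the appropriate tag via the definitions of $\otimes$ and $\boxslash$, and then use the disjointness of $H^+$ and $H^-$ together with the bijection $\varphi$ for the cardinality claim. Your observation that the tag of $conj_{a^\epsilon}(c^\gamma)$ is always $\gamma$ is a clean way to organize the bookkeeping, but the underlying computation is identical to the paper's.
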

 \begin{proof}
 Suppose   $b^{\delta}$ is an arbitrary element of $G$. Then, we have

\begin{eqnarray*}
conj_{a^{+}}(G)&=&\lbrace (a^{+}\otimes b^{\delta})\boxslash a^{+}\ \vert \ b^{\delta}\in G \rbrace \\
&=&\lbrace (a^{+}\otimes b^{+})\boxslash a^{+}\ \vert \ b^{+}\in G \rbrace \cup \lbrace (a^{+}\otimes b^{-})\boxslash a^{+}\ \vert\  b^{-}\in G \rbrace\\
&=&\lbrace (a^{+}\oplus b^{+})\boxslash a^{+}\ \vert\  b^{+}\in H^+ \rbrace \cup \lbrace (a^{+}\oplus b^{+})^{-}\boxslash a^{+}\ \vert\ b^{-}\in H^- \rbrace\\
&=&\lbrace (a^{+}\oplus b^{+})\boxminus a^{+}\ \vert\  b^{+}\in H^+ \rbrace \cup \lbrace ((a^{+}\oplus b^{+})\boxminus a^{+})^{-}\ \vert\ b^{+}\in H^+ \rbrace\\
&=& conj_{a^{+}}(H^{+}) \cup (conj_{a^{+}}(H^{+}))^{-}.
\end{eqnarray*}

\begin{eqnarray*}
conj_{a^{-}}(G)&=&\lbrace (a^{-}\otimes b^{\delta})\boxslash a^{-}\ \vert\ b^{\delta}\in G \rbrace \\
&=&\lbrace (a^{-}\otimes b^{+})\boxslash a^{-}\ \vert\ b^{+}\in G \rbrace \cup \lbrace (a^{-}\otimes b^{-})\boxslash a^{-}\ \vert\ b^{-}\in G \rbrace\\
&=&\lbrace (a^{+}\oplus b^{+})^-\boxslash a^{-}\ \vert\ b^{+}\in H^+ \rbrace \cup \lbrace (a^{+}\oplus b^{+})\boxslash a^{-}\ \vert\ b^{-}\in H^- \rbrace\\
&=&\lbrace (a^{+}\oplus b^{+})\boxminus a^{+}\ \vert\ b^{+}\in H^+ \rbrace \cup \lbrace ((a^{+}\oplus b^{+})\boxminus a^{+})^-\ \vert\ b^{+}\in H^+ \rbrace\\
&=& conj_{a^{+}}(H^{+}) \cup (conj_{a^{+}}(H^{+}))^{-}
\end{eqnarray*}

This shows that $conj_{a^{+}}(G)=conj_{a^{-}}(G)=conj_{a^{+}}(H^{+}) \cup (conj_{a^{+}}(H^{+}))^{-}$. Since
$conj_{a^{+}}(H^{+})$ $\cap$ $(conj_{a^{+}}(H^{+}))^{-}=\emptyset$ and $\vert conj_{a^{+}}(H^{+}) \vert$ $=$ $\vert   (conj_{a^{+}}(H^{+}))^{-} \vert $,  $\vert conj_{a^{+}}(G) \vert =2\vert conj_{a^{+}}(H^{+}) \vert$ which completes the proof.
\end{proof}

\begin{thm}
With notations of Theorem \ref{t1}, $(H^{+})^{'}=G^{'}$
\end{thm}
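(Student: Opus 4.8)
The plan is to show that every commutator of $G$ actually lands in $H^+$ and coincides there with the corresponding commutator of $H^+$; once the two commutator sets are seen to be equal, the equality of the generated subgyrogroups will follow from the fact that $H^+$ is a subgyrogroup of $G$.

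First I would write the commutator in $G$ explicitly. By definition $[a^\epsilon, b^\delta] = \oslash(a^\epsilon \otimes b^\delta) \otimes gyr_G[a^\epsilon, b^\delta](b^\delta \otimes a^\epsilon)$. I treat the superscripts multiplicatively, so that the sign of $a^\epsilon\otimes b^\delta$ is $\epsilon\delta$ with $+\cdot+=-\cdot-=+$ and $+\cdot-=-\cdot+=-$; with this convention the defining formula for $\otimes$ gives $a^\epsilon \otimes b^\delta = (a^+ \oplus b^+)^{\epsilon\delta}$, and the formula for $\oslash$ from the proof of Theorem \ref{t1} gives $\oslash(a^\epsilon \otimes b^\delta) = (\ominus(a^+ \oplus b^+))^{\epsilon\delta}$. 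Likewise $b^\delta \otimes a^\epsilon = (b^+ \oplus a^+)^{\epsilon\delta}$, and the gyrator formula of Theorem \ref{t1} yields $gyr_G[a^\epsilon, b^\delta]\bigl((b^+ \oplus a^+)^{\epsilon\delta}\bigr) = (gyr_{H^+}[a^+,b^+](b^+ \oplus a^+))^{\epsilon\delta}$.

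The crucial point is the sign bookkeeping: the final $\otimes$-product of two elements each carrying the sign $\epsilon\delta$ has sign $(\epsilon\delta)^2 = +$, independently of $\epsilon$ and $\delta$. Hence every commutator of $G$ lies in $H^+$, and its $H^+$-component is exactly $\ominus(a^+ \oplus b^+) \oplus gyr_{H^+}[a^+,b^+](b^+ \oplus a^+) = [a^+, b^+]_{H^+}$. Thus $[a^\epsilon, b^\delta] = [a^+, b^+]_{H^+}$ for all choices of $\epsilon,\delta$, and consequently the set of all commutators of $G$ coincides with the set of all commutators of $H^+$.

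Finally I would pass from equal commutator sets to equal derived subgyrogroups. Since $H^+$ is a subgyrogroup of $G$ whose operation and gyroautomorphisms are precisely the restrictions of those of $G$ (indeed $x^+ \otimes y^+ = x^+ \oplus y^+$, and $gyr_G[x^+,y^+]$ restricts on $H^+$ to $gyr_{H^+}[x^+,y^+]$), the subgyrogroup of $G$ generated by any subset of $H^+$ stays inside $H^+$ and agrees with the subgyrogroup of $H^+$ generated by that same subset. Applying this to the common commutator set gives $G' = (H^+)'$. I expect the only genuine subtlety to be this last step, namely justifying that generation inside $G$ of a subset of $H^+$ cannot escape $H^+$ and reproduces generation inside $H^+$; this rests on $H^+ \leq G$ together with the compatibility of the gyroautomorphisms. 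The commutator computation itself is a routine sign-chase.
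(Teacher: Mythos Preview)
Your proof is correct and follows essentially the same approach as the paper: both reduce every commutator $[a^\epsilon,b^\delta]_G$ to $[a^+,b^+]_{H^+}$ and then identify the generated subgyrogroups. The only cosmetic difference is that the paper carries out the computation by splitting into the two cases $(\epsilon,\delta)\in\{(+,+),(-,-)\}$ and $(\epsilon,\delta)\in\{(+,-),(-,+)\}$, whereas you handle both at once via the multiplicative sign convention $\epsilon\delta$; your extra remark that generation of a subset of $H^+$ inside $G$ cannot leave $H^+$ (because $H^+\leq G$ with compatible gyrations) is a point the paper passes over in one line.
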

\begin{proof}
Suppose that $a^{\epsilon}$ and $b^{\delta}$ are arbitrary elements of $G$. We first prove that a commutator of two elements in $G$ is equal to a commutator of some elements in $H^+$. To do this, the following two cases are considered:
\begin{itemize}
\item[1.] $(\epsilon,\delta) = (+,+)$ or $(-,-)$. Then,
\begin{eqnarray*}
[a^{\epsilon},b^{\delta}]_{G}&=&\oslash (a^{\epsilon}\otimes b^{\delta})\otimes gyr_{G}[a^{\epsilon},b^{\delta}](b^{\delta}\otimes a^{\epsilon})\\
&=&\oslash(a^{+}\oplus b^{+})\otimes gyr_{G}[a^{\epsilon},b^{\delta}](b^{+}\oplus a^{+})\\
&=&\ominus(a^{+}\oplus b^{+})\otimes gyr_{H^{+}}[a^{+},b^{+}](b^{+}\oplus a^{+})\\
&=&\ominus(a^{+}\oplus b^{+})\oplus gyr_{H^{+}}[a^{+},b^{+}](b^{+}\oplus a^{+})\\
&=&[a^{+},b^{+}]_{H^{+}}.
\end{eqnarray*}

\item[2.] $(\epsilon,\delta) = (-,+)$ or $(+,-)$. In this case,
\begin{eqnarray*}
[a^{\epsilon},b^{\delta}]_{G}&=&\oslash (a^{\epsilon}\otimes b^{\delta})\otimes gyr_{G}[a^{\epsilon},b^{\delta}](b^{\delta}\otimes a^{\epsilon})\\
&=&\oslash ((a^{+}\oplus b^{+})^{-})\otimes gyr_{G}[a^{\epsilon},b^{\delta}]((b^{+}\oplus a^{+})^{-})
\\
&=&(\ominus(a^{+}\oplus b^+))^{-}\otimes (gyr_{H^{+}}[a^{+},b^{+}](b^{+}\oplus a^{+}))^{-}\\
&=&\ominus (a^{+}\oplus b^{+})\oplus gyr_{H^{+}}[a^{+},b^{+}](b^{+}\oplus a^{+})\\
&=&[a^{+},b^{+}]_{H^{+}}.
\end{eqnarray*}
\end{itemize}
This proves that $G^{'} = \langle [x,y] \mid x, y \in G \rangle = \langle [x,y] \mid x, y \in  H^+ \rangle = (H^+)^{'}$, proving the lemma.
\end{proof}

\begin{thm}
Suppose that $G$ and $H^+$ are finite gyrogroups as in Theorem \ref{t1}. Then  $H^{+} \unlhd  G$.
\end{thm}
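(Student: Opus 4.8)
The plan is to invoke part (1) of Theorem \ref{t0}, which characterises normality of a subgyrogroup by the well-definedness of the induced operation on the coset space $G/H^{+}$. Note first that $H^{+}$ is indeed a subgyrogroup of $G$, since $\otimes$ restricts to $\oplus$ and $gyr_{G}$ restricts to $gyr_{H^{+}}$ on $H^{+}$ by Theorem \ref{t1}. I deliberately avoid part (2): its hypothesis (a) requires $gyr_{G}[h^{+},a^{\epsilon}] = id_{G}$ for all $h^{+}\in H^{+}$ and $a^{\epsilon}\in G$, which by the gyrator formula forces $gyr_{H^{+}}[h^{+},a^{+}] = id_{H^{+}}$ for all $h^{+},a^{+}\in H^{+}$, i.e. $H^{+}$ would have to be a group. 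Thus part (2) is unavailable for nondegenerate $H^{+}$, whereas part (1) applies without such a restriction.

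First I would compute the left cosets of $H^{+}$. For $a^{+}\in H^{+}$, the definition of $\otimes$ gives $a^{+}\otimes H^{+} = a^{+}\oplus H^{+} = H^{+}$, since $H^{+}$ is a subgyrogroup and left translation by one of its elements permutes it. For $a^{-}\in H^{-}$, one gets $a^{-}\otimes H^{+} = \{(a^{+}\oplus h^{+})^{-}\mid h^{+}\in H^{+}\} = (a^{+}\oplus H^{+})^{-} = (H^{+})^{-} = H^{-}$. Hence $G/H^{+} = \{H^{+},H^{-}\}$ consists of exactly two cosets, and the coset of any $x^{\gamma}\in G$ is determined solely by its sign $\gamma$.

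It then remains to verify that $(a^{\epsilon}\otimes H^{+})\otimes(b^{\delta}\otimes H^{+}) = (a^{\epsilon}\otimes b^{\delta})\otimes H^{+}$ is independent of the chosen representatives. By the previous paragraph this reduces to checking that the sign of $a^{\epsilon}\otimes b^{\delta}$ depends only on the pair $(\epsilon,\delta)$, which is immediate from the definition of $\otimes$: the product lies in $H^{+}$ precisely when $(\epsilon,\delta)\in\{(+,+),(-,-)\}$ and in $H^{-}$ otherwise, exactly mirroring addition in $\mathbb{Z}_{2}$. The coset product is therefore well defined, and Theorem \ref{t0}(1) yields $H^{+}\unlhd G$; equivalently, the parity map $\pi\colon G\to\mathbb{Z}_{2}$ with $\pi(H^{+})=0$ and $\pi(H^{-})=1$ is a gyrogroup homomorphism whose kernel is $H^{+}$, which is the defining homomorphism exhibiting normality. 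I do not anticipate a real obstacle: the entire content is the two-coset computation and the sign bookkeeping, and one should note that finiteness of $G$ and $H^{+}$ plays no essential role, the argument depending only on the explicit sign structure of $\otimes$.
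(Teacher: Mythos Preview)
Your argument is correct and, in fact, more self-contained than the paper's. The paper proceeds differently: it first observes that $gyr_{G}[a^{\epsilon},b^{\delta}](H^{+})\subseteq H^{+}$ (because $gyr_{G}[a^{\epsilon},b^{\delta}]c^{+}=gyr_{H^{+}}[a^{+},b^{+}]c^{+}\in H^{+}$), then notes $[G:H^{+}]=2$ and invokes an external result, \cite[Theorem~4.5]{41}, to conclude normality. Your route instead stays entirely within the paper by computing the two cosets $H^{+}$ and $H^{-}$ and checking that the sign calculus built into $\otimes$ makes the coset product well defined, so that Theorem~\ref{t0}(1) applies; equivalently, you exhibit the parity homomorphism $\pi:G\to\mathbb{Z}_{2}$ with kernel $H^{+}$, which realises the very definition of normality. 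What your approach buys is independence from the cited literature and an explicit quotient map, and your closing observation that finiteness is irrelevant is a genuine (if minor) strengthening of the stated theorem. What the paper's approach buys is brevity and a reusable structural fact (gyration-invariance of $H^{+}$ in $G$), at the cost of relying on a result not proved in the paper.
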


\begin{proof}
Note that $gyr_{G}[a^{\epsilon},b^{\delta}](H^{+})=\lbrace gyr_{G}[a^{\epsilon},b^{\delta}]c^{+}\vert c^{+}\in H^{+}\rbrace$.  By Theorem \ref{t1}, $gyr_{G}[a^{\epsilon},b^{\delta}]c^{+}=gyr_{H^{+}}[a^{+},b^{+}]c^{+}$.
On the other hand, $gyr_{H^{+}}[a^{\epsilon},b^{\delta}] \in Aut(H^{+})$ and hence $gyr_{G}[a^{\epsilon},b^{\delta}](H^{+})$ $\subseteq H^{+}$. Since $[G:H^{+}]=2$, by \cite[Theorem 4.5]{41}, we conclude that $H^{+}\unlhd G$.
\end{proof}

\begin{thm} \label{t9_0}
 Suppose that $H^{+}$  and $G$ are gyrogroups as in Theorem \ref{t1}. If  $B$ is a subgyrogroup of $G$ such that $B\nsubseteq H^+$, then the following hold:
\begin{enumerate}
\item  There exists $A^+\leq H^+$ and $L^- \subseteq H^-$ such that $B = A^+ \cup L^-$;

\item  $ A^+ \cap L^{+} =\emptyset$ or $L^+ = A^+$, where $L^+ = \varphi^{-1}(L^-)$;

\item  $A^+ \cup L^+ \leq H^+$;

\item  $|A^+|=|L^-|$.
\end{enumerate}
\end{thm}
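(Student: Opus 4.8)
The plan is to analyze the structure of a subgyrogroup $B$ that is not contained in $H^+$, using the disjoint decomposition $G = H^+ \cup H^-$ and the explicit formula for $\otimes$. First I would define $A^+ = B \cap H^+$ and $L^- = B \cap H^-$, so that $B = A^+ \cup L^-$ is automatic as a set-theoretic decomposition, with $A^+ \subseteq H^+$ and $L^- \subseteq H^-$. Since $B \nsubseteq H^+$, the set $L^-$ is nonempty. The real content of part (1) is showing that $A^+$ is itself a subgyrogroup of $H^+$: because $B$ is closed under $\otimes$ and $a^+ \otimes b^+ = a^+ \oplus b^+$ lands in $H^+$, the set $A^+$ is closed under $\oplus$; one then checks $0^+ \in A^+$ (the identity of $G$ lies in $H^+$ and in any subgyrogroup), that inverses $\ominus x^+$ stay in $A^+$ via the formula for $\oslash$, and that the gyroautomorphisms restrict correctly using the gyrator formula from Theorem \ref{t1}.

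Next I would establish part (4), $|A^+| = |L^-|$, which I expect to be the key structural step that makes everything else fall into place. Fix any element $\ell^- \in L^-$ (possible since $L^-\neq\emptyset$). For each $a^+ \in A^+$, the product $a^+ \otimes \ell^- = (a^+ \oplus \varphi^{-1}(\ell^-))^-$ lies in $B \cap H^- = L^-$, so left-translation by $a^+$ maps $A^+$ into $L^-$; since $\oplus$ admits left cancellation in the gyrogroup $H^+$, this map is injective, giving $|A^+| \le |L^-|$. Conversely, pick a fixed $\ell^-_0 \in L^-$ and send each $m^- \in L^-$ to $m^- \otimes (\oslash \ell^-_0)$; using the multiplication rule for two elements of $H^-$ this product is $\varphi^{-1}(m^-) \oplus (\ominus \varphi^{-1}(\ell^-_0)) \in H^+$, and it lies in $B$, hence in $A^+$, and this map is injective by right cancellation. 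Together these give $|A^+| = |L^-|$.

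For parts (2) and (3) I would work with $L^+ = \varphi^{-1}(L^-)$, which by part (4) satisfies $|L^+| = |A^+|$. The natural approach is to show $A^+ \cup L^+$ is closed under $\oplus$ and forms a subgyrogroup of $H^+$, which is part (3); the point is that products of elements from $\{A^+, L^+\}$ all reduce, via the four cases of $\otimes$ and the map $\varphi^{-1}$, to products inside $B$, and applying $\varphi^{-1}$ to the $H^-$-components pulls everything back into $H^+$. Once $A^+ \cup L^+ \le H^+$ is known, part (2) is a cardinality-and-closure argument: if $A^+ \cap L^+ \neq \emptyset$, pick $z \in A^+ \cap L^+$; using that both $A^+$ and $A^+\cup L^+$ are subgyrogroups of equal-size building blocks with $|A^+| = |L^+|$, I would argue that the coset-type structure forces $L^+ = A^+$. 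Concretely, the existence of a common element together with closure should let me translate one subgyrogroup onto the other and conclude equality from $|L^+| = |A^+|$.

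The main obstacle I anticipate is part (3), verifying that $A^+ \cup L^+$ is genuinely a subgyrogroup of $H^+$ rather than merely a union of cosets. The difficulty is that the closure of $B$ under $\otimes$ translates, after applying $\varphi^{-1}$, into mixed relations among $A^+$ and $L^+$ whose signs must be tracked carefully through all four multiplication cases; in particular one must confirm that $L^+ \oplus L^+ \subseteq A^+ \cup L^+$ and $A^+ \oplus L^+ \subseteq A^+ \cup L^+$ using that two $H^-$-elements multiply into $H^+$ while an $H^+$-element times an $H^-$-element stays in $H^-$. I expect the gyrator formula from Theorem \ref{t1} to be essential here to guarantee that the relevant gyroautomorphisms preserve the combined set, and careful bookkeeping of the $\varepsilon,\delta$ signs will be the crux of making the closure argument rigorous.
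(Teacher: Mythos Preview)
Your outline matches the paper's proof closely for parts (1), (3), and (4): the paper also sets $A^{+}=B\cap H^{+}$, $L^{-}=B\cap H^{-}$, verifies $A^{+}\le H^{+}$, proves $A^{+}\cup L^{+}\le H^{+}$ by exactly the four-case closure check you describe (invoking \cite[Proposition~22]{2} so that closure under $\oplus$ alone suffices, with no separate gyrator argument needed), and obtains $|A^{+}|=|L^{-}|$ by the same translation-counting idea. Two small slips to fix: in your injection $a^{+}\mapsto a^{+}\otimes\ell^{-}$ with $\ell^{-}$ fixed, injectivity uses \emph{right} cancellation in $H^{+}$, not left; and in part (2) you refer to $L^{+}$ as a subgyrogroup, which it need not be.

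The one genuine difference is part (2). The paper does not use cardinality: it first shows $A^{+}\cap L^{+}$ is a subgyrogroup of $H^{+}$, hence contains $0^{+}$, hence $0^{-}\in L^{-}\subseteq B$; then $0^{-}\otimes b^{-}=b^{+}\in B$ for every $b^{+}\in L^{+}$ gives $L^{+}\subseteq A^{+}$, and $0^{-}\otimes b^{+}=b^{-}\in B$ for every $b^{+}\in A^{+}$ gives $A^{+}\subseteq L^{+}$. Your translation-plus-cardinality idea can also be made rigorous once you have the closure relations from part (3), namely $A^{+}\oplus L^{+}\subseteq L^{+}$ and $L^{+}\oplus L^{+}\subseteq A^{+}$: if $z\in A^{+}\cap L^{+}$ then $z\oplus L^{+}\subseteq A^{+}\cap L^{+}$, and left cancellation forces $|A^{+}\cap L^{+}|\ge |L^{+}|=|A^{+}|$, whence $A^{+}=L^{+}$. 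Either route works; the paper's is slightly more direct and does not rely on having (4) in hand first.
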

\begin{proof}
\begin{enumerate}
\item It is easy to see that $B \not\subseteq H^-$. Set $A^+ = B \cap H^{+}$   and    $L^- = B \cap H^{-}$. Assume  that $B\leq G$,  $B\nsubseteq H^+$ and $B \nsubseteq H^-$ then $A^+$ and $L^-$ are not empty sets. Also $A^+\leq H^+$, $L^- \subseteq H^-$ and $B = A^+ \cup L^-$.
\item Suppose  $ A^+ \cap L^{+} \ne \emptyset$. We prove that $L^+ = A^+$. First we show $ A^+ \cap L^{+}$ is a subgyrogroup of $H^+$. To do this, we choose two elements $a^+, b^+ \in  A^+ \cap L^{+}$.  Since $ A^+ \leq H^+$, $a^+ \oplus b^+ \in A^+$. On the other hand, $a^+ \in A^+ \subseteq B$, $b^- \in L^- \subseteq B$ and $B  \leq G$, it can easily see that $(a^+ \oplus b^+)^-=a^+ \otimes b^- \in B\cap H^- = L^-$. This proves that $a^+ \oplus b^+ \in L^+$ and so $a^+ \oplus b^+ \in A^+ \cap L^+$. We now apply \cite[Proposition 22]{2} to deduce that $ A^+ \cap L^{+} \leq H^+$.

Now, we have to show that  $L^+ = A^+$. Suppose that $b^+ \in L^+$ is arbitrary. Since  $ A^+ \cap L^{+} \leq H^+$, $0^+ \in A^+ \cap L^{+}$. Thus, $0^-, b^- \in L^- \subseteq B$. But $B \leq G$, so $0^- \otimes b^- = 0^+ \oplus b^+ = b^+ \in B = A^+ \cup L^-$. This means that $b^+ \in A^+$ and therefore $L^+ \subseteq A^+$. To prove the converse, let $b^+ \in A^+$. Note that $0^+ \in A^+ \cap L^{+}$ and so $0^-  \in L^- \subseteq B$. But $0^- \otimes b^+ = (0^+ \oplus b^+)^- = b^- \in B = A^+ \cup L^-$. This shows that $b^+ \in L^+$, i.e., $A^+ \subseteq L^+$. Therefore, $L^+ = A^+$.

\item Define $K=A^+ \cup L^{+}$.  We will prove that $(K,\oplus)$ is a subgyrogroup of $H^{+}$. To do this, we choose the elements $a^{+}$ and $b^{+}$ in $K$. If $a^{+}, b^{+} \in A^+$ then $a^{+} \oplus  b^{+}\in A^+ \subseteq K$, since $A^+\leq H^+$ as desired.  If $a^{+}, b^{+} \in L^{+}$, then $a^{-}, b^{-} \in L^- = B \cap H^- \subseteq B$. This shows that $a^{-}\otimes b^{-} \in B$. On the other hand, $a^{-} \otimes b^{-} = a^+ \oplus b^+ \in B \cap H^+ = A^+ \subseteq K$. We now assume that  $a^{+}\in A^+$ and $b^{+}\in L^{+}$. It is clear that $(a^{+}\oplus b^{+})^{-}\in H^{-}$. Since $a^{+}\in A^+ \subseteq  B$ and $b^{-}\in L^- \subseteq B$,  $(a^{+}\oplus b^{+})^-=a^+ \otimes b^- \in B$. Thus, $ (a^{+} \oplus b^{+})^-\in B\cap H^{-}=L^-$ and so $a^{+}\oplus b^{+} \in L^{+}\subseteq K$.  Finally, suppose that  $a^{+}\in L^+$ and $b^{+} \in A^{+}$. A similar argument proves that $a^{+}\oplus b^{+} \in L^{+}\subseteq K$. Therefore, by \cite[Proposition 22]{2},  $K$ is a subgyrogroup of $H^{+}$.

\item By (1) $A^+$ and $L^-$ are not empty sets. Let $A^+ = \{ a_1^+, \ldots, a_n^+\}$. Since $L^-\ne \emptyset$, there exists an element $x^- \in L^-$. So $ x^-\otimes a_i^+ = ( x^+\oplus a_i^+)^- \in B\cap H^-  = L^-$. This shows that $ x^-\otimes A^+ = \{  x^-\otimes a_1^+, \ldots, x^-\otimes a_n^+ \} \subseteq L^-$. On the other hand, by the left cancelation law the elements of $ x^-\otimes A^+$ are distinct thus $|A^+| \leq |L^-|$. A similar argument proves that if $L^- = \{ b_1^-, \ldots, b_m^-\}$, then for each $y^- \in L^-$, 	$y^- \otimes L^- = \{ y^- \otimes b_1^-, \ldots, y^- \otimes b_m^- \}  \subseteq A^+$. Therefore, $|L^-| \leq |A^+|$, which proves the result.
\end{enumerate}
Hence the result.
\end{proof}

\begin{thm} \label{t9}
 Suppose that $H^{+}$  and $G$ are gyrogroups of Theorem \ref{t1}. A non-empty subset  $B$ of $G$ is a subgyrogroup of $G$ if and only if one of the following hold:
\begin{enumerate}
\item  $B\leq H^{+}$;

\item  There exists $A^+\leq H^+$ and $L^- \subseteq H^-$ such that $B = A^+ \cup L^-$ with the property that for each $x, y \in L^-$, we have $x \otimes y \in A^+$. Also $ A^+ \cap L^{+} = \emptyset$ and $ A^+ \cup L^{+} \leq H^+$ that  $L^+ = \varphi^{-1}(L^-)$;

\item  $B=A^+\cup A^-$ such that  $A^+\leq H^+$  and $A^-=\varphi(A^+)$.

\end{enumerate}
\end{thm}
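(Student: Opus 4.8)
The statement is a biconditional, so the plan is to treat the two implications separately, using throughout the decomposition $B = A^+ \cup L^-$ with $A^+ = B \cap H^+$ and $L^- = B \cap H^-$ whenever $B \nsubseteq H^+$, and writing $L^+ = \varphi^{-1}(L^-)$ and $K = A^+ \cup L^+$. For the forward implication I would invoke the preceding Theorem~\ref{t9_0} as a black box; the converse amounts to three closure checks, each finished by the subgyrogroup criterion \cite[Proposition 22]{2}.

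For the forward direction, suppose $B$ is a subgyrogroup of $G$. If $B \subseteq H^+$, then since $\otimes$ restricts to $\oplus$ on $H^+$ and $gyr_G[a^+,b^+]$ restricts to $gyr_{H^+}[a^+,b^+]$ for $a^+,b^+ \in B$ (Theorem~\ref{t1}), $B$ is at once a subgyrogroup of $H^+$, giving alternative (1). If $B \nsubseteq H^+$, Theorem~\ref{t9_0} already delivers $A^+ \leq H^+$, $L^- \subseteq H^-$ with $B = A^+ \cup L^-$, the dichotomy $A^+ \cap L^+ = \emptyset$ or $L^+ = A^+$, and $K = A^+ \cup L^+ \leq H^+$. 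If $L^+ = A^+$ then $L^- = \varphi(A^+) = A^-$, which is alternative (3). If $A^+ \cap L^+ = \emptyset$, the only item left for alternative (2) is the product condition on $L^-$: for $x = a^-,\, y = b^- \in L^- \subseteq B$, closure of $B$ gives $x \otimes y = a^+ \oplus b^+ \in B$, and since this lies in $H^+$ it lies in $B \cap H^+ = A^+$. This settles the forward direction.

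For the converse I would verify in each case that $B$ is closed under $\otimes$ (and under the inverse $\oslash$) and then quote \cite[Proposition 22]{2}. Cases (1) and (3) are routine bookkeeping over sign patterns: in (1) the only products are $a^+ \otimes b^+ = a^+ \oplus b^+ \in A^+$; in (3) the four product types reduce to $a^+ \oplus b^+$ or $(a^+ \oplus b^+)^-$ with both factors in $A^+$, hence land in $A^+$ or $A^-$. In case (2) the two same-sign products are immediate: $A^+ \otimes A^+ \subseteq A^+$ because $A^+ \leq H^+$, and $L^- \otimes L^- \subseteq A^+$ is exactly the standing hypothesis $x \otimes y \in A^+$.

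The crux — and the step I expect to be the main obstacle — is the two mixed products $A^+ \otimes L^-$ and $L^- \otimes A^+$, each equal to $(a^+ \oplus b^+)^-$ with one factor in $A^+$ and the other in $L^+$; to land in $L^- \subseteq B$ I must show such a product stays in $L^+$ rather than slipping into $A^+$. The clean way to see this is to exploit that $A^+$ is a subgyrogroup of $K$ whose complement in $K$ is exactly $L^+$ (this uses $A^+ \cap L^+ = \emptyset$ and $A^+ \cup L^+ = K$). Since left and right translations of a gyrogroup are bijections (the cancellation laws), for $a^+ \in A^+$ the maps $x \mapsto a^+ \oplus x$ and $x \mapsto x \oplus a^+$ are bijections of $K$ carrying $A^+$ onto $A^+$, and therefore carry the complement $L^+$ onto $L^+$. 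Hence $a^+ \oplus b^+ \in L^+$ whenever exactly one of the factors lies in $A^+$ and the other in $L^+$, so $(a^+ \oplus b^+)^- \in L^-$ and $B$ is closed; the same bijection argument applied to inversion gives $\oslash L^- \subseteq L^-$, handling inverses. With closure in hand, \cite[Proposition 22]{2} finishes each case. The only delicate point to phrase carefully is this preservation of the partition $K = A^+ \sqcup L^+$ under translation by $A^+$-elements; everything else is routine.
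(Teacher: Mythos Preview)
Your proof is correct and tracks the paper's closely: invoke Theorem~\ref{t9_0} for the forward direction, then check closure case by case and finish with \cite[Proposition~22]{2}. Your translation/bijection argument for the mixed products in case~(2) is just a structural rephrasing of the paper's element-wise contradiction (if $a^+\oplus b^+\in A^+$ with $a^+\in A^+$, $b^+\in L^+$, then $b^+=\ominus a^+\oplus(a^+\oplus b^+)\in A^+\cap L^+$; and symmetrically $a^+=(a^+\oplus b^+)\boxminus b^+$ on the other side).
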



\begin{proof}
Let $B$ be a subgyrogroup of $G$. If  $B\subseteq H^+$, then there is nothing to prove. Hence it is enough to consider the case where $B \not\subseteq H^+$. By Theorem \ref{t9_0}, the conditions (2) and (3) are true.

Conversely, we assume that $B$ is a nonempty subset of $G$ which satisfies the conditions of the theorem.
\begin{enumerate}
\item If $B \leq H^+$ then there is nothing to prove.

\item Suppose that $B$ satisfy the condition $(2)$. By \cite[Proposition 22]{2}, it is enough to prove that $B$ is closed under the operation of $G$.
We choose arbitrary elements $a^\epsilon$ and $b^\delta$ from $B$. We first assume that $a^\epsilon, b^\delta \in A^+$. Then  $\epsilon = \delta = +$ and since $A^+ \leq H^+$, $a^\epsilon \otimes b^\delta =  a^+ \otimes b^+=a^+ \oplus b^+ \in A^+  \subseteq B$, as desired. Furthermore, if $a^\epsilon, b^\delta \in L^-$, then by our assumption $a^\epsilon \otimes b^\delta \in A^+ \subseteq B$, which is our goal.
We now assume that $a^+ \in A^+$ and $b^- \in L^-$. Since $A^+ \cup L^+$ is a subgyrogroup of $H^+$,  $a^+ \oplus b^+ \in A^+ \cup L^+$. We claim that $a^+ \oplus b^+ \in L^+$; otherwise, $a^+ \oplus b^+ \in A^+$ and $b^+ = \ominus a^+ \oplus (a^+ \oplus b^+) \in A^+ \cap L^+$, a contradiction. Thus, $a^+ \oplus b^+ \in L^+$ and hence $a^\epsilon \otimes b^\delta$ $=$ $a^+ \otimes b^-$ $=$ $(a^+ \oplus b^+)^- \in L^- \subseteq B$, as imagined.
For the final case, we assume that  $a^- \in L^-$ and $b^+ \in A^+$. Since $A^+ \cup L^+$ is a subgyrogroup of $H^+$, $a^+ \oplus b^+ \in A^+ \cup L^+$. We claim that $a^+ \oplus b^+ \in L^+$. Otherwise, $a^+ \oplus b^+ \in A^+$ and $a^+ = (a^+ \oplus b^+) \boxminus b^+ \in A^+ \cap L^+$, a contradiction. Thus, $a^+ \oplus b^+ \in L^+$ and hence  $a^- \otimes b^+$ $=$ $(a^+ \oplus b^+)^- \in L^- \subseteq B$, which completes this part of the task.

\item In this case, we have to prove that $B = A^- \cup A^+$ is closed under the operation $\otimes$. To see this, we choose arbitrary elements $a^\epsilon$ and $b^\delta$ in $B$. We first assume that $a^+, b^+ \in A^+$. Since $A^+ \leq H^+$,  $a^+ \otimes b^+ = a^+ \oplus b^+ \in A^+  \subseteq B$, as desired. If $a^-, b^- \in A^-$, then $a^+, b^+ \in A^+$. Since $A^- \subseteq G$ and $A^+ \leq H^+$, $a^- \otimes b^- = a^+ \oplus b^+ \in A^+  \subseteq B$ which is again our goal. Next we assume that   $a^+ \in A^+$ and $b^- \in A^- $. Since $A^+ \leq H^+$, $a^+ \oplus b^+ \in A^+$. On the othr hand,  $a^+, b^- \in G$ hence $a^+ \otimes b^- = (a^+ \oplus b^+)^- \in A^- \subseteq B$, as imagined. For the final case, a similar argument proves that if  $a^- \in A^-$ and $b^+ \in A^+$, then  $a^- \otimes b^+$ $=$ $(a^+ \oplus b^+)^- \in A^- \subseteq B$
\end{enumerate}
Hence the result.
\end{proof}


\section{Concluding Remarks}

A gyrogroup of a prime power order $p^n$ is called a $p-$gyrogroup. In this paper, a method for constructing new gyrogroups from the old ones is presented by which it is possible to construct five non-isomorphic $2-$gyrogroups of order $2^n, n \geq 3$. To do this, it is enough to consider the gyrogroups $K(1)$, $L(1)$, $M(1)$, $N(1)$ and $O(1)$ of order eight defined in Tables 1 and 2. Then apply Theorem 2.1 to construct five gyrogroups of order 16 and so on. In the general case, a characterization of the subgyrogroups of new constructed gyrogroup $G$ with respect to the gyrogroup $H^+$ were given. Two open questions for future study were posed.


\begin{table}[H]\label{tab1}
\centering
\caption{The Cayley Tables and associated Gyration Tables of the Gyrogroups $K(1)$, $L(1)$, $M(1)$.}

 \begin{footnotesize}
\begin{tabular}{c|ccccccccc|cccccccc}
$\oplus_K$ & 0 & 1 & 2 & 3 & 4 & 5 & 6 & 7 & $gyr_K$ & 0 & 1 & 2 & 3 & 4 & 5 & 6 & 7\\ \toprule
0        & 0 & 1 & 2 & 3 & 4 & 5 & 6 & 7 & 0     & I & I & I & I & I & I & I & I\\
1        & 1 & 0 & 3 & 2 & 5 & 4 & 7 & 6 & 1     & I & I & I & I & I & I & I & I\\
2&2&3&0&1&6&7&4&5 &  2&I&I&I&I&A&A&A&A\\
3&3&2&1&0&7&6&5&4 & 3&I&I&I&I&A&A&A&A \\
4&4&5&6&7&0&1&2&3  &  4&I&I&A&A&I&I&A&A\\
5&5&4&7&6&1&0&3&2  & 5&I&I&A&A&I&I&A&A\\
6&6&7&4&5&3&2&1&0  &  6&I&I&A&A&A&A&I&I\\
7&7&6&5&4&2&3&0&1  & 7&I&I&A&A&A&A&I&I\\ \bottomrule
\end{tabular}
\begin{tabular}{c|ccccccccc|cccccccc}
$\oplus_L$ & 0 & 1 & 2 & 3 & 4 & 5 & 6 & 7 & $gyr_L$ & 0 & 1 & 2 & 3 & 4 & 5 & 6 & 7\\ \hline
0        & 0 & 1 & 2 & 3 & 4 & 5 & 6 & 7 & 0     & I & I & I & I & I & I & I & I\\
1        & 1 & 0 & 3 & 2 & 5 & 4 & 7 & 6 & 1     & I & I & I & I & I & I & I & I\\
2&2&3&0&1&6&7&4&5 &  2&I&I&I&I&B&B&B&B\\
3&3&2&1&0&7&6&5&4 & 3&I&I&I&I&B&B&B&B \\
4&4&5&6&7&0&1&2&3  &  4&I&I&B&B&I&I&B&B\\
5&5&4&7&6&1&0&3&2  & 5&I&I&B&B&I&I&B&B\\
6&6&7&5&4&3&2&0&1  &  6&I&I&B&B&B&B&I&I\\
7&7&6&4&5&2&3&1&0  & 7&I&I&B&B&B&B&I&I\\ \bottomrule
\end{tabular}
\begin{tabular}{c|ccccccccc|cccccccc}
$\oplus_M$ & 0 & 1 & 2 & 3 & 4 & 5 & 6 & 7 & $gyr_M$ & 0 & 1 & 2 & 3 & 4 & 5 & 6 & 7\\ \hline
0        & 0 & 1 & 2 & 3 & 4 & 5 & 6 & 7 & 0     & I & I & I & I & I & I & I & I\\
1        & 1 & 0 & 3 & 2 & 5 & 4 & 7 & 6 & 1     & I & I & I & I & I & I & I & I\\
2&2&3&0&1&6&7&4&5 &  2&I&I&I&I&C&C&C&C\\
3&3&2&1&0&7&6&5&4 & 3&I&I&I&I&C&C&C&C \\
4&4&5&6&7&1&0&3&2  &  4&I&I&C&C&I&I&C&C\\
5&5&4&7&6&0&1&2&3  & 5&I&I&C&C&I&I&C&C\\
6&6&7&5&4&2&3&1&0  &  6&I&I&C&C&C&C&I&I\\
7&7&6&4&5&3&2&0&1  & 7&I&I&C&C&C&C&I&I\\ \bottomrule
\end{tabular}
\end{footnotesize}
\end{table}

\begin{table}[H]\label{tab4}
\centering
\caption{  The Cayley Tables and associated Gyration Tables of the Gyrogroups  $N(1)$ and $O(1)$.}
\begin{footnotesize}
\begin{tabular}{c|ccccccccc|cccccccc}
$\oplus_N$ & 0 & 1 & 2 & 3 & 4 & 5 & 6 & 7 & $gyr_N$ & 0 & 1 & 2 & 3 & 4 & 5 & 6 & 7\\ \hline
0        & 0 & 1 & 2 & 3 & 4 & 5 & 6 & 7 & 0     & I & I & I & I & I & I & I & I\\
1        & 1 & 0 & 3 & 2 & 5 & 4 & 7 & 6 & 1     & I & I & I & I & I & I & I & I\\
2&2&3&0&1&6&7&4&5 &  2&I&I&I&I&D&D&D&D\\
3&3&2&1&0&7&6&5&4 & 3&I&I&I&I&D&D&D&D \\
4&4&5&6&7&1&0&3&2  &  4&I&I&D&D&I&I&D&D\\
5&5&4&7&6&0&1&2&3  & 5&I&I&D&D&I&I&D&D\\
6&6&7&5&4&3&2&0&1 &  6&I&I&D&D&D&D&I&I\\
7&7&6&4&5&2&3&1&0 & 7&I&I&D&D&D&D&I&I\\ \bottomrule
\end{tabular}
\begin{tabular}{c|ccccccccc|cccccccc}
$\oplus_O$ & 0 & 1 & 2 & 3 & 4 & 5 & 6 & 7 & $gyr_O$ & 0 & 1 & 2 & 3 & 4 & 5 & 6 & 7\\ \hline
0        & 0 & 1 & 2 & 3 & 4 & 5 & 6 & 7 & 0     & I & I & I & I & I & I & I & I\\
1        & 1 & 0 & 3 & 2 & 5 & 4 & 7 & 6 & 1     & I & I & I & I & I & I & I & I\\
2&2&3&0&1&6&7&4&5 &  2&I&I&I&I&E&E&E&E\\
3&3&2&1&0&7&6&5&4 & 3&I&I&I&I&E&E&E&E \\
4&4&5&7&6&1&0&2&3  &  4&I&I&E&E&I&I&E&E\\
5&5&4&6&7&0&1&3&2  & 5&I&I&E&E&I&I&E&E\\
6&6&7&5&4&2&3&1&0  &  6&I&I&E&E&E&E&I&I\\
7&7&6&4&5&3&2&0&1  & 7&I&I&E&E&E&E&I&I\\ \bottomrule
\end{tabular}
\end{footnotesize}
\end{table}

\vskip 3mm

\noindent{\bf Acknowledgement.} The research of the first and second  authors are partially supported by the University of Kashan under grant no. 364988/63.

\vskip 3mm


\begin{thebibliography}{33}
\bibitem{f} M. Ferreira, Factorizations of M$\ddot{\rm o}$bius gyrogroups, \textit{Adv. Appl. Clifford Algebras} \textbf{19} (2009) 303--323.

\bibitem{1} A. Kerber, \textit{Applied Finite Group Actions}, Second edition, Algorithms and Combinatorics \textbf{19}, Springer-Verlag, Berlin, 1999.

\bibitem{2} T. Suksumran, The algebra of gyrogroups: Cayley's theorem, Lagrange's theorem, and isomorphism theorems, Essays in Mathematics and its Applications, 369--437, Springer, 2016.


\bibitem{4}  T. Suksumran,  Gyrogroup actions: A generalization of group actions, \textit{J. Algebra} \textbf{454}  (2016) 70--91.

\bibitem{41} T. Suksumran, Special subgroups of gyrogroups: Commutators, nuclei and radical, \textit{Math. Interdisc. Res.} \textbf{1} (1) (2016) 53--68.

\bibitem{42} T. Suksumran and K. Wiboonton, Lagrange's theorem for gyrogroups and the Cauchy property, \textit{Quasigroups Related Systems} \textbf{22} (2) (2014) 283--294.

\bibitem{44} T. Suksumran and K. Wiboonton, Isomorphism theorems for gyrogroups and L-subgyrogroups, \textit{J. Geom. Symmetry Phys.} \textbf{37} (2015) 67--83.

\bibitem{45} The Gap Team,  GAP--Groups, Algorithms and Programming, Lehrstuhl De f$\ddot{\rm u}$r Mathematik, RWTH, Aachen, 1995.

\bibitem{5} A. A. Ungar, The intrinsic beauty, harmony and interdisciplinarity in Einstein velocity addition law: Gyrogroups and gyrovector spaces, \textit{Math. Interdisc. Res.} \textbf{1} (1) (2016) 5--51.

\bibitem{6}  A. Ungar, The Lorentz transformation group of the special theory of relativity without Einstein's isotropy convention, \textit{Philos. Sci.} \textbf{53} (3) (1986) 395--402.

\bibitem{7} A. Ungar, Thomas rotation and parametrization of the Lorentz transformation group, \textit{Found. Phys. Lett.} \textbf{1} (1988) 57--89.

\bibitem{8}  A. A. Ungar,  \textit{Beyond the Einstein Addition Law and its Gyroscopic Thomas Precession. The Theory of Gyrogroups and Gyrovector Spaces}, Fundamental Theories of Physics \textbf{117}, Kluwer Academic Publishers Group, Dordrecht, 2001.

\bibitem{u} A. A. Ungar, \textit{Analytic Hyperbolic Geometry and Albert Einstein's Special Theory of Relativity}, World Scientific Publishing Co. Pte. Ltd., 2008.

\bibitem{9} A. A. Ungar, \textit{Beyond Pseudo-Rotations in Pseudo-Euclidean Spaces. An Introduction to the Theory
of Bi-Gyrogroups and Bi-Gyrovector Spaces},  Academic Press, London, 2018.
\end{thebibliography}
\end{document}